\newtheorem{thm}{Theorem}
\newtheorem{corr}[thm]{Corollary}
\newtheorem{lem}[thm]{Lemma}
\newtheorem{prop}[thm]{Proposition}
\newtheorem{conj}[thm]{Conjecture}
\theoremstyle{definition}
\newtheorem{defn}{Definition}[section]
\newtheorem{rem}[]{Remark}
\def\R{\mathbb R}
\def\SS{\mathbb S}
\def\cal{\mathcal}
\def\f{\frac}
\def\td{\tilde}
\def\b{\bar}
\def\bn{\bar{\nu}}
\def\la{\langle}
\def\ra{\rangle}
\def\pt{\partial}
\def\e{\varepsilon}
\begin{document}
\title{Stability of capillary hypersurfaces in a Euclidean ball}
\author{Haizhong Li}
\address{Department of mathematical sciences, Tsinghua University, 100084, Beijing, P. R. China}
\email{hli@math.tsinghua.edu.cn}
\author{Changwei Xiong}
\address{Department of mathematical sciences, Tsinghua University, 100084, Beijing, P. R. China}
\email{xiongcw10@mails.tsinghua.edu.cn}
\thanks{The research of the authors was supported by NSFC No. 11271214.}
\subjclass[2010]{{53A10}, {49Q10}}
\keywords{capillary hypersurface; instability; spherical boundary}

\maketitle

\begin{abstract}
We study the stability of capillary hypersurfaces in a unit Euclidean ball. It is proved that if the mass center of the generalized body enclosed by the immersed capillary
hypersurface and the wetted part of the sphere is located at the origin, then the hypersurface is unstable. An immediate result is that all known examples except the totally
geodesic ones and spherical caps are unstable.
\end{abstract}

\section{Introduction}

Capillarity is an important physical phenomena, which occurs when two different materials contact and do not mix. Given a container $B$ with an incompressible liquid drop $T$ in it, the interface of the liquid and the air is a capillary surface $M$. In absence of gravity, the interface $M$ is of constant mean curvature and the contact angle of $M$ to the boundary $\pt B$ is constant. One should compare this setting with soap bubble (resp. soap film), where the surface has no boundary (resp. fixed boundary) and constant mean curvature.

The literature for the study of capillarity is extensive and we refer to the book of Finn \cite{F1}, where the treatment of the theory is mainly in the nonparametric case and in the more general situation of presence of gravity. Also we mention \cite{F2} for a more recent survey about this topic.

In this paper we are concerned with the special case that the container $B$ is a unit Euclidean ball and no gravity is involved. We study the (weak) stability for capillary hypersurfaces. This problem has been discussed by Ros and Souam \cite{RS}, where they dealt with surface case and obtained some topological and geometrical restrictions. For the hypersurface case with free boundary (the contact angle is $\pi/2$), Ros and Vergasta also proved some interesting results in \cite{RV}.

Applying the same argument as in the proof of Proposition 1.1 in \cite{RS}, we know that the hyperplanes and the spherical caps in a unit Euclidean ball are capillarily stable. Recently, Marinov \cite{M} proved that, for surface case, all other known examples are unstable. We generalize Marinov's result to the hypersurface case. In fact we prove the following theorem.

\begin{thm}\label{thm1}
Let $x:M^n\rightarrow \R^{n+1}$ be an immersed capillary hypersurface in a unit Euclidean ball $B^{n+1}$ and $\Omega$ the wetted part of the boundary of the ball. Denote by $T$ the generalized body enclosed by $x(M)$ and $\Omega$. If the mass center of $T$ is at the origin, the capillary hypersurface $M$ is unstable.
\end{thm}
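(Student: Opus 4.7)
The plan is to exhibit a volume-preserving normal variation whose second variation is strictly negative. Inspired by the translation symmetries of the ambient $\R^{n+1}$ (which are broken by the ball $B$), I would take the Euclidean coordinate-type test functions. For $a\in\R^{n+1}$ set $\phi_a(p):=\langle x(p),a\rangle$, and to enforce the volume constraint $\int_M u\,dA=0$ correct $\phi_a$ by its mean: $\psi_a:=\phi_a-\bar\phi_a$ with $\bar\phi_a:=\frac{1}{|M|}\int_M\phi_a\,dA$. The strategy is to show that, for any orthonormal basis $\{e_1,\dots,e_{n+1}\}$ of $\R^{n+1}$,
$$\sum_{i=1}^{n+1}Q(\psi_{e_i},\psi_{e_i})<0,$$
which forces at least one $\psi_{e_i}$ to give a volume-preserving admissible variation with negative second variation, hence instability.

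First I would record the second variation of the capillary energy. Following the derivation of Ros--Souam \cite{RS,RV} (which extends verbatim to the hypersurface case), it takes the form
$$Q(u,u)=\int_M(|\nabla u|^2-|A|^2u^2)\,dA-\int_{\partial M}q\,u^2\,ds,$$
where the boundary weight $q$ is an explicit expression in the contact angle $\theta$ and in the second fundamental forms of $M$ and of the sphere $\partial B$ along $\partial M$. The key identities I would exploit to evaluate $Q(\phi_a,\phi_a)$ are $\Delta_M\langle x,a\rangle=nH\langle\nu,a\rangle$ (from $\Delta_M x=nH\nu$), $L\langle\nu,a\rangle=0$ (ambient translations preserve the CMC condition), and $|\nabla\langle x,a\rangle|^2=|a|^2-\langle\nu,a\rangle^2$, together with the adapted-frame decomposition $\nu=\cos\theta\,\bar\nu+\sin\theta\,\bar\mu$ forced by the capillary condition on $\partial M$.

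Summing over the orthonormal basis yields dramatic simplifications thanks to $\sum_i\langle x,e_i\rangle^2=|x|^2$, $\sum_i\langle\nu,e_i\rangle^2=1$, and $\sum_i\langle x,e_i\rangle\langle\nu,e_i\rangle=\langle x,\nu\rangle$; in particular the interior contribution reduces to an integral involving only $|x|^2$, $\langle x,\nu\rangle$, $|A|^2$ and $H$, and the boundary contribution simplifies by $|x|^2=1$ on $\partial M$. The mass center hypothesis enters through the identity
$$\int_M(1-|x|^2)\,\nu\,dA=0,$$
which follows quickly from $\int_T x\,dV=0$ by splitting $\partial T=M\cup\Omega$ and exploiting $|x|=1$ on $\Omega$. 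This is precisely the identity that kills the cross terms produced by the Lagrange correction $\bar\phi_a$ and, in conjunction with the Minkowski-type relation $\int_M(n+nH\langle x,\nu\rangle)\,dA=\sin\theta\,|\partial M|$ (obtained by integrating $\Delta_M(|x|^2/2)$), leaves a manifestly negative expression.

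The main obstacle I anticipate is the careful bookkeeping of the boundary integrals on $\partial M$. The integration by parts of $\int_M|\nabla\phi_a|^2$ produces a term $\int_{\partial M}\phi_a\,\partial_\mu\phi_a$ that must be combined with the capillarity term $-\int_{\partial M}q\,\phi_a^2$; both have to be re-expressed in the adapted frame $(\bar\nu,\bar\mu)$ via the contact angle, and then matched against the explicit form of $q$. Verifying that, after summation over $i$ and invocation of the mass center identity, all the signs line up against stability is the technical heart of the argument, and is where the hypothesis that the center of mass lies at the origin is indispensable.
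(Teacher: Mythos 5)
Your proposal diverges from the paper at the very first step --- the choice of test function --- and the divergence is fatal rather than cosmetic. The paper does not use the coordinate functions $\langle x,a\rangle$; it uses $\phi[\xi]=\langle Y[\xi],N\rangle$, where $Y[\xi]=-(1+|x|^2)\xi+2\langle\xi,x\rangle x$ is the conformal Killing field generated by the M\"obius transformations of the ball. The whole point of that choice is twofold. First, $Y[\xi]$ is tangent to $\SS^n$ and generates an angle-preserving flow, which forces the exact Robin identity $\phi_\nu-q\phi=0$ along $\partial M$ (Lemma \ref{lem1}); the boundary term in the second variation vanishes identically, for every $\xi$, before any summation. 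Second, $L\phi[\xi]=-2n\langle\xi,N+Hx\rangle$, so the trace of the quadratic form over the coordinate directions collapses to $-2n\int_M\langle N+Hx,(1+|x|^2)N-2\langle x,N\rangle x\rangle\,da\le -\int_M|\nabla|x|^2|^2\,da\le 0$, with equality iff $|x|$ is constant. Finally, the mass-center hypothesis is used to show $\int_M\phi[\xi]\,da=-2(n+1)\int_T\langle\xi,x\rangle\,dv=0$, i.e.\ $\phi[\xi]$ is admissible \emph{without} any mean-value correction.

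With your functions $\psi_a=\langle x,a\rangle-\bar\phi_a$ none of this survives, and the step you defer as ``careful bookkeeping'' is where the argument breaks. Concretely, $\partial_\nu\langle x,e_i\rangle=\langle\nu,e_i\rangle$, so after summing over an orthonormal basis the boundary contribution is $\int_{\partial M}(\langle x,\nu\rangle-q)\,ds=-\int_{\partial M}\cot\theta\,(\cos\theta+\sigma(\nu,\nu))\,ds$, which has no definite sign for general contact angle and general $\sigma(\nu,\nu)$; the interior contribution is $-\int_M(nH\langle x,N\rangle+|\sigma|^2|x|^2)\,da$, which likewise has no sign when $H\langle x,N\rangle<0$. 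The cross terms $-2\bar\phi_{e_i}Q(\phi_{e_i},1)$ coming from the Lagrange correction involve $\int_M|\sigma|^2\langle x,e_i\rangle\,da+\int_{\partial M}q\langle x,e_i\rangle\,ds$, and the mass-center identity $\int_M(1-|x|^2)N\,da=0$ (which you state correctly) does not annihilate them. So the asserted inequality $\sum_iQ(\psi_{e_i},\psi_{e_i})<0$ is not established and is false in general; indeed, if coordinate test functions sufficed, the free-boundary case would already have been settled by Ros--Vergasta, who used exactly such functions and obtained only partial restrictions. The missing idea is the conformal field whose normal component satisfies the natural boundary condition $\phi_\nu=q\phi$ exactly; without it the boundary term cannot be controlled.
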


Here since we assume $M$ is immersed, $x(M)$ may have self-intersections. Thus we need to consider the generalized body $T$. When $M$ is embedded, $T$ is understood in the common sense. See Remark \ref{rem1} below for more explanation.

For $n=2$, our Theorem \ref{thm1} reduces to Marinov's result in \cite{M}. We also note that his argument relies on the conformal coordinates, which can not be generalized to the higher dimensional case.

Applying Theorem \ref{thm1} to Delaunay hypersurfaces, we get the following Corollary \ref{cor1}. Recall that Delaunay hypersurfaces are the ones of revolution with constant mean curvature. By Proposition 4.3 in \cite{HMRR}, Delaunay hypersurfaces are classified as: an unduloid, cylinder, nodoid, sphere, catenoid, or a hyperplane. To guarantee the portion of a Delaunay hypersurface in a Euclidean ball is also capillary, it shall have some symmetry. See Section \ref{sec2} below for more details. And in that case, we call it Delaunay capillary hypersurface. From Theorem \ref{thm1} we have
\begin{corr}\label{cor1}
The only stable Delaunay capillary hypersurface $M^n$ in a unit Euclidean ball $B^{n+1}$ is a totally geodesic hyperplane or a spherical cap.
\end{corr}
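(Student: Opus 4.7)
The plan is to reduce Corollary \ref{cor1} to Theorem \ref{thm1} by showing that every Delaunay capillary hypersurface which is neither a hyperplane nor a sphere has its generalized body $T$ centered at the origin. By the classification recalled from \cite{HMRR}, $M^n$ is of one of six types: hyperplane, sphere, cylinder, unduloid, nodoid, or catenoid. Hyperplanes are totally geodesic and spheres give spherical caps, so these are exactly the two stable families already isolated in the statement and require no further argument. What remains is to handle the four non-umbilic types.

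For each such non-umbilic Delaunay type, the first step is to argue that the axis of revolution of $M$ must pass through the center $O$ of $B^{n+1}$. Indeed, the rotational symmetry of $M$ around its axis must be compatible with the constant-angle condition along $M\cap\partial B^{n+1}$; if the axis missed $O$ then $M\cap\partial B^{n+1}$ would fail to be a union of small spheres invariant under the axial rotation, so the contact angle could not be constant. Having placed the axis through $O$, the capillary portion is bounded by two $(n-1)$-spheres of $\partial B^{n+1}$, one at each end along the axis.

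The second step is to show the capillary portion is symmetric under the reflection across the hyperplane $\Pi$ through $O$ perpendicular to the axis. Since the generating curve of $M$ is periodic in the cylinder, unduloid and nodoid cases and has a single neck in the catenoid case, the demand that the two end $(n-1)$-spheres produce equal contact angles with $\partial B^{n+1}$ pins the profile's midpoint precisely at $O$, forcing this reflection symmetry. The generalized body $T$ then inherits both the rotational symmetry about the axis and the reflection across $\Pi$; the unique common fixed point of these two symmetries is $O$, so the mass center of $T$ is at the origin. Theorem \ref{thm1} immediately yields that $M$ is unstable.

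The main obstacle I anticipate is the rigorous justification of the reflection symmetry across $\Pi$: one must rule out an asymmetric capillary portion of an unduloid, nodoid, cylinder or catenoid whose two ends accidentally share the same contact angle without being mutual mirror images. This requires a careful study of the Delaunay profile via its first integral (the standard ``force'' of Delaunay), together with an analysis of how the contact angle depends on the axial coordinate of the end sphere of $M\cap\partial B^{n+1}$. This is the content deferred to Section 2 of the paper, and once those properties of Delaunay capillary hypersurfaces are in hand the corollary follows cleanly by combining them with Theorem \ref{thm1}.
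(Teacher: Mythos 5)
Your core reduction is exactly the paper's: show that the generalized body $T$ has its mass center at the origin and invoke Theorem \ref{thm1} to get instability of the non-umbilic types, with hyperplanes and spherical caps covered by the stability proposition proved in Section \ref{sec2}. The difference lies in how the symmetry of $T$ is obtained. In the paper, ``Delaunay capillary hypersurface'' is \emph{defined} (end of Section \ref{sec2}) to be a portion of an unduloid, cylinder, nodoid or catenoid whose revolution axis is the $x^1$-axis through the origin \emph{and} which is symmetric with respect to the hyperplane $\{x^1=0\}$. Both symmetries are part of the hypothesis, so the mass center is at the origin by your ``unique common fixed point'' observation, and the corollary follows in one line from Theorem \ref{thm1}.

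You instead set out to \emph{derive} the passage of the axis through the center and the reflection symmetry from the constant-contact-angle condition alone. That is a strictly stronger claim, and the gap you flag yourself is real: you must exclude an asymmetric portion of an unduloid, nodoid, cylinder or catenoid whose two boundary $(n-1)$-spheres accidentally meet $\partial B^{n+1}$ at the same angle without being mirror images (for a profile with several periods inside the ball this is not a routine verification), and likewise your argument that an off-center axis is incompatible with constant contact angle is only heuristic. You defer this analysis to ``Section 2 of the paper,'' but Section \ref{sec2} does not contain it --- it sidesteps the issue by building the symmetry into the definition. So read against the statement as the paper intends it, your proof is correct but your steps on the axis and the reflection are superfluous; read as a proof of the stronger classification you implicitly aim at, the key step is missing.
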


Our approach for proving Theorem \ref{thm1} is as follows. In higher dimensional case, we find that we can construct a conformal killing vector field $Y[\xi]$ for any fixed $\xi\in\SS^n$ from the natural conformal transformation family on $B^{n+1}$. Using the normal part $\la Y[\xi],N\ra$ as the test function we can define a symmetric quadratic form $Q(\xi_1,\xi_2)$. By summing $Q$ over $(n+1)$ coordinate directions we find $Q$ has at least one negative eigenvalue. This summation technique can be compared with J.~Simons' work \cite{S}. At last, under the hypothesis of Theorem \ref{thm1} we can derive the instability of the hypersurface. Our argument indicates that this conformal field is very important and we can use it to conclude that the mass center of minimal submanifolds with free boundary in a unit Euclidean ball is at the origin (See Proposition \ref{prop1}). We refer the readers to \cite{FS,FS1,FS2} for the very recent work on the minimal submanifolds with free boundary.

At last, as an application of our argument, we give a new proof of the classical result due to Barbosa and do Carmo \cite{BdC}, which states that the only closed stable immersed hypersurface of constant mean curvature in $\R^{n+1}$ is the round sphere.

An outline of this paper is as follows. In Section \ref{sec2} after fixing some notations and definitions, we prove the stability of hyperplanes and spherical caps. Then we construct the crucial conformal vector field. We also review some known facts about the Delaunay hypersurfaces. In Section \ref{sec3} we give the proof of Theorem \ref{thm1}. In last section, we discuss some applications of our method.

\section{Preliminaries}\label{sec2}

\subsection{Notations and definitions}

Let $x:M^n\rightarrow \R^{n+1}$ be an orientable immersed hypersurface in the unit Euclidean ball $B^{n+1}\subset \R^{n+1}$ with $x(int\ M)\subset B^{n+1}$ and $x(\pt M)\subset \pt B^{n+1}$. Suppose $\Omega\subset \pt B^{n+1}$ such that $\pt \Omega=x(\pt M)$. And denote by $T\subset B^{n+1}$ the part of ball satisfying $\pt T=x(M)\cup \Omega$.

\begin{rem}\label{rem1}
If $x(M)$ has self-intersections, $T$ may be viewed as the finite union of some domains $T_i, i=1,\cdots,m$, i.e. $T=\cup_{i=1}^m T_i$. Here $T_i$ may intersect with each other. If there are not one choices for $\{T_i\}_{i=1}^m$, choose one and fix it. In the proof we will see that only the property $\pt T=x(M)\cup \Omega$ is needed. And if there is no confusion, we will write $M$ (resp. $\pt M$) for $x(M)$ (resp. $x(\pt M)$) for simplicity.
\end{rem}

Let $N$ be the unit normal of $M$ pointing inwards to $T$ and $\b{N}$ the unit outward normal of $\pt B^{n+1}$. Denote by $\nu$ and $\bn$ the conormals of $\pt M$ in $M$ and $\Omega$, respectively. Let $D$ (resp. $\nabla$) be the connection of $\R^{n+1}$ (resp. $M$). Then the second fundamental form of $M$ in $\R^{n+1}$ is given by $\sigma(X_1,X_2)=\la D_{X_1}X_2,N\ra$ for $\forall\, X_1,X_2\in TM$. When taking an orthonormal basis $\{e_i\}_{i=1}^n$ on $TM$, we also denote by $h_{ij}$ the components $\sigma(e_i,e_j)$. So the mean curvature $H$ of $M$ is $H=\f{1}{n}\sum\limits_{i=1}^n h_{ii}$. And the second fundamental form of $\pt B$ in $\R^{n+1}$ is given by $\Pi(Y_1,Y_2)=\la D_{Y_1}Y_2,-\b{N} \ra$ for $\forall\, Y_1,Y_2\in T(\pt B)$. At last let $\theta\in (0,\pi)$ be the angle between $\nu$ and $\bn$. See Figure \ref{pic} for an illustration.

\begin{figure}
\includegraphics[scale=0.7]{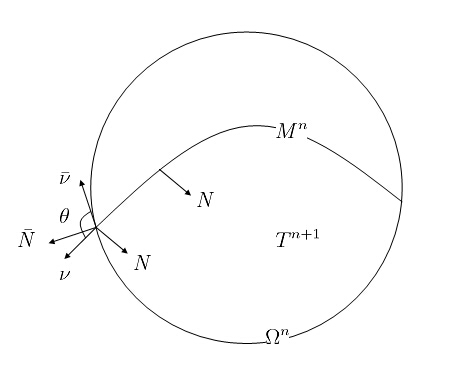}
\caption{A typical illustration\label{pic}}
\end{figure}

Following \cite{RS}, we discuss the variation of $M$.
\begin{defn}
An admissible variation of $x:M^n\rightarrow \R^{n+1}$ is a differentiable map $X: (-\e,\e)\times M\rightarrow \R^{n+1}$ so that $X_t:M^n\rightarrow \R^{n+1}, t\in(-\e,\e)$ given by $X_t(p)=X(t,p), p\in M$ is an immersion satisfying $X_t(int\ M)\subset int\ B$ and $X_t(\pt M)\subset \pt B$ for all $t$, and $X_0=x$.
\end{defn}

Now for given $\theta \in (0,\pi)$, we define a energy functional
\begin{equation}
E(t)=|M(t)|-\cos \theta |\Omega(t)|,
\end{equation}
where $|\cdot|$ denotes the area function. And the volume functional can be defined as
\begin{equation*}
V(t)=\int_{[0,t]\times M} X^*dv,
\end{equation*}
where $dv$ is the standard volume element of $\R^{n+1}$.

Under these constraints, we have
\begin{defn}
An immersed hypersurface $x:M^n\rightarrow \R^{n+1}$ is called capillary if $E'(0)=0$ for any admissible volume-preserving variation of $x$.
\end{defn}
Note that we have the following formulas,
\begin{align}
E'(0)&=-n\int_M Hfda+\int_{\pt M} \la Y,\nu-\cos \theta \bn\ra ds,\\
V'(0)&=-\int_M f da,
\end{align}
where $Y$ is the variational vector field $Y(p)=\f{\pt X}{\pt t}(p)|_{t=0}$, $f$ its normal component $f=\la Y,N\ra$, and $da$ and $ds$ are the corresponding area elements.

From these formulas we see that $M$ is capillary if and only if it has constant mean curvature and makes constant contact angle $\theta$ with $\pt B$. Furthermore, one can compute the second derivative at $t=0$ with respect to an admissible volume-preserving variation to get (see e.g. the appendix of \cite{RS})
\begin{equation}
E''(0)=-\int_M \left(\Delta f+(|\sigma|^2+Ric(N))f\right)fda+\int_{\pt M} (\f{\pt f}{\pt \nu}-qf)fds,
\end{equation}
where $f\in \cal{F}:=\{f\in H^1(M),\int_M fda=0\}$, $Ric(N)$ is the Ricci curvature of the ambient space and
\begin{equation}
q=\f{1}{\sin \theta}\Pi(\bn,\bn)+\cot \theta \sigma(\nu,\nu).
\end{equation}
In our setting, $Ric(N)=0$ and $\Pi(\bn,\bn)=1$.

\begin{defn}
A capillary hypersurface $M$ is called (weakly) stable if $E''(0)\geq 0$ for all $f\in \cal{F}$.
\end{defn}

In the sequel we will denote by $\pt^2 E(\phi)$ the quantity $E''(0)$ for a given function $\phi$.

\subsection{Stable examples of capillary hypersurfaces}

First we prove the stability of totally geodesic capillary hypersurfaces and spherical caps. The proof is similar to that of Proposition 1.1 in \cite{RS}. We include it for completeness.
\begin{prop}
Let $B^{n+1}\subset \R^{n+1}$ be a unit Euclidean ball. Then totally geodesic capillary hypersurfaces and spherical caps are stable.
\end{prop}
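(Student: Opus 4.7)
Since both totally geodesic pieces and spherical caps are totally umbilical in $\R^{n+1}$, we have $h_{ij}=H\delta_{ij}$, so $|\sigma|^2=nH^2$ and $\sigma(\nu,\nu)=H$. Combined with $\mathrm{Ric}(N)=0$ and $\Pi(\bn,\bn)=1$, the Robin weight simplifies to $q=\f{1}{\sin\theta}+H\cot\theta$, and after integration by parts the second variation becomes
\[
\pt^2 E(f)=\int_M \bigl(|\nabla f|^2-nH^2 f^2\bigr)\,da-q\int_{\pt M}f^2\,ds,\qquad f\in\cal{F}.
\]

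My plan is to exploit the rotational symmetry of $M$. Intrinsically, $M$ is a geodesic ball in a constant-curvature space form: a flat disc of radius $\sin\theta$ when $H=0$, and a geodesic cap on the round $n$-sphere of radius $R=1/|H|$ otherwise, with $\pt M$ an orbit of the subgroup of rotations fixing the axis through the origin. In geodesic polar coordinates $(\rho,\omega)\in[0,\rho_0]\times\SS^{n-1}$ about this axis, I would decompose $f=\sum_{k\geq 0}f_k(\rho)Y_k(\omega)$ along an $L^2$-orthonormal basis of $k$-th spherical harmonics $Y_k$. Orthogonality then decouples $\pt^2 E$ into a family of one-dimensional weighted Rayleigh quotients in the radial variable, which can be analyzed separately.

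For each angular mode $k\geq 2$, the angular Laplacian produces a positive bulk term proportional to $k(k+n-2)$, and a standard weighted Poincar\'e-type inequality yields strict non-negativity with a spectral gap. The tight cases are $k=1$, where Jacobi fields generated by capillarity-preserving rotations of $B$ are present and contribute exactly $\pt^2 E=0$ (consistent with stability being non-strict), and $k=0$, where constants would give a negative contribution but are ruled out by the constraint $\int_M f\,da=0$; a direct one-dimensional computation under this constraint then gives non-negativity for the radial problem. The main obstacle is the sharp verification in these tight modes: one must use the capillary-geometric identity relating $\rho_0$, $R$, and $\theta$, coming from $\pt M$ being the intersection of $\pt B$ with the umbilical sphere containing $M$, in order to check that the Robin constant $q=\f{1}{\sin\theta}+H\cot\theta$ sits exactly at the critical threshold of the weighted one-dimensional eigenvalue problem, so that orthogonality to the positive first eigenfunction forces non-negativity.
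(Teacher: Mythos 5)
Your strategy is genuinely different from the paper's, which does not perform any spectral decomposition at all: it observes that a totally geodesic $B^n(R)$ is an equatorial hypersurface of the concentric ball $B'$ of radius $R=\sin\theta$ and invokes the Bokowski--Sperner result that such a disc minimizes area for the partitioning problem in $B'$, while for a spherical cap it passes to the halfspace bounded by the hyperplane through $\pt M$, invokes the Gonzalez--Massari--Tamanini stability of caps in a halfspace, and checks the elementary identity $\f{1}{\sin\theta}+\f{\cot\theta}{R}=\f{\cot\theta'}{R}$ to see that the Robin constants match. Your reduction of the second variation to $\int_M(|\nabla f|^2-nH^2f^2)\,da-q\int_{\pt M}f^2\,ds$ is correct, and the harmonic decomposition is a legitimate alternative route in principle.

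However, as written your argument has a genuine gap: the entire burden of the proof sits in exactly the steps you defer. For $k\geq 2$ it is not automatic that the angular term $k(k+n-2)$ dominates both the $-nH^2$ bulk term and the negative Robin boundary term; the standard way to get this is to first prove non-negativity of the $k=1$ radial problem and then use monotonicity of the radial operators in $k$, so the $k=1$ case is not merely a ``tight case'' but the engine of the whole argument. For $k=1$ you must verify three things you only assert: that the rotation-generated profile ($f_1(\rho)=\sin(\rho/R)$ for the cap, $f_1(\rho)=\rho$ for the disc) actually satisfies the Robin condition $\pt_\nu f=qf$ at $\rho_0$ with $q=\f{1}{\sin\theta}+H\cot\theta$ (this is where the capillary-geometric relation between $\rho_0$, $R$, $\theta$ enters, and it is a computation, not a formality); that this profile is positive on $(0,\rho_0)$ so that it is the \emph{first} eigenfunction of the radial Robin problem and its eigenvalue $0$ is the bottom of that spectrum; and, for $k=0$, that the constrained Rayleigh quotient is non-negative, which requires exhibiting the relevant radial Jacobi fields (normal translation and the $\la x,N\ra$-type field) and arguing orthogonality, not just noting that constants are excluded. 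Until those one-dimensional verifications are carried out, the proposal is a plan rather than a proof; the paper's comparison argument avoids all of this by outsourcing the sharp spectral facts to the two cited minimization results.
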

\begin{proof}
First assume $M$ is a totally geodesic capillary hypersurface, i.e., an $n$-dimensional ball $B^n(R)$ with radius $R$ in $B^{n+1}$. Then the contact angle $\theta$ satisfies $\sin \theta =R$. By the definition of stability, we have to prove
\begin{equation}\label{eq2}
\int_M|\nabla f|^2 da\geq \f{1}{R}\int_{\pt M}f^2 ds,\, \text{for}\,\,\forall f\in \cal{F}.
\end{equation}
Consider now the $(n+1)$-dimensional ball $B'$ of radius $R$ having $M$ as an equatorial totally geodesic hypersurface. Then by \cite{BS}, $M$ is area minimizing for partitioning problem in $B'$. Thus $M$ is stable in $B'$, which is equivalent to the inequality \eqref{eq2}.

Next assume $M$ is a spherical cap in $B^{n+1}$ with $R$ the radius of the sphere containing $M$ and $\theta$ the contact angle. Consider the $n$-dimensional hyperplane $P$ containing $\pt M$. Then $M$ is a capillary hypersurface in a halfspace with a contact angle $\theta'$. By \cite{GMT}, $M$ is stable in the halfspace, which means
\begin{equation}\label{eq3}
\int_M(|\nabla f|^2-\f{n}{R^2}f^2) da\geq \f{\cot \theta'}{R}\int_{\pt M}f^2 ds,\, \text{for}\,\,\forall f\in \cal{F}.
\end{equation}
Elementary calculation leads to
\begin{equation}\label{eq4}
\f{1}{\sin \theta}+\f{\cot \theta}{R}=\f{\cot \theta'}{R}.
\end{equation}
Now \eqref{eq3} and \eqref{eq4} together yield the stability of $M$ in $B^{n+1}$.

\end{proof}

\subsection{Conformal transformations on the Euclidean ball}

Now we construct a conformal vector field. Fix a vector $a\in B^{n+1}$. Then (see e.g. Section $3.8$ in \cite{SY})
\begin{equation}
\varphi_a(x)=\f{(1-|a|^2)x-(1-2\langle a,x\rangle+|x|^2)a}{1-2\langle a,x\rangle+|a|^2|x|^2}
\end{equation}
defines a map from $B^{n+1}$ to $B^{n+1}$ and from $\SS^n$ to $\SS^n$, since we have
\begin{equation*}
1-|\varphi_a(x)|^2=\f{(1-|a|^2)(1-|x|^2)}{1-2\la a,x\ra+|a|^2|x|^2}.
\end{equation*}

Moreover $\varphi_a$ is conformal. In fact, by a direct calculation we can check that
\begin{equation*}
|d\varphi_a|^2=\left(\f{1-|a|^2}{1-2\la a,x\ra+|a|^2|x|^2}\right)^2|dx|^2.
\end{equation*}

Note that $\varphi_a(a)=0$, $\varphi_a(0)=-a$, $\varphi_a$ fixes two points $\pm \f{a}{|a|}$ and $\varphi_0$ is an identity.

Next fix $\xi\in \SS^n$. Let $a=t\xi$ with $-1<t<1$. Then
\begin{equation}
f_t(x)=\varphi_{t\xi}(x)=\f{(1-t^2)x-(1-2t\langle \xi,x\rangle+|x|^2)t\xi}{1-2t\langle \xi,x\rangle+t^2|x|^2}
\end{equation}
is a family of conformal transformations with parameter $t$.

Thus $f_t$ determines a conformal vector field $Y[\xi]$ as follows.
\begin{equation}
Y[\xi]=\f{d}{dt}\bigg|_{t=0}f_t(x)=-(1+|x|^2)\xi+2\langle \xi,x\rangle x.
\end{equation}

Note that $Y[\xi]$ is tangential along the sphere $\SS^n$, since for $\forall x\in \SS^n$,
\begin{equation*}
\la Y[\xi],x\ra=-(1+|x|^2)\la \xi,x\ra+2\la \xi,x\ra |x|^2=0.
\end{equation*}

\subsection{Delaunay hypersurfaces in Euclidean space}

In this subsection, following \cite{HMRR} we review some facts about Delaunay hypersurfaces, which is rotational and of constant mean curvature $H$. These hypersurfaces are the models we are concerned with in Theorem \ref{thm1}.

Let $M^n\subset \R^{n+1}$ be a hypersurface which is invariant under the action of the orthogonal group $O(n)$ fixing the $x^1$-axis. Assume $M$ is generated by a curve $\Gamma$ contained in the $x^1x^2$-plane. Then it suffices to determine the curve $\Gamma$.

Parametrize the curve $\Gamma=(x^1,x^2)$ by arc-length $s$. Denote by $\alpha$ the angle between the tangent to $\Gamma$ and the positive $x^1$-direction and choose the normal vector $N=(\sin \alpha,-\cos \alpha)$. Then $(x^1,x^2;\alpha)$ satisfies the following system of ordinary differential equations
\begin{equation*}
\begin{cases}
(x^1)'=\cos \alpha,\\
(x^2)'=\sin \alpha,\\
\alpha'=-nH+(n-1)\f{\cos \alpha}{x^2}.
\end{cases}
\end{equation*}
The first integral of this system is given by
\begin{equation*}
(x^2)^{n-1}\cos \alpha-H(x^2)^n=F,
\end{equation*}
where the constant $F$ is called the force of the curve $\Gamma$ and it together with $H$ will determine the curve as follows. (See Proposition 4.3 in \cite{HMRR})

\begin{prop}
The curve $\Gamma$ and the hypersurface $M$ generated by $\Gamma$ have the following several possible types.
\begin{enumerate}
  \item If $FH>0$ then $\Gamma$ is a periodic graph over the $x^1$-axis. It generates a periodic embedded unduloid, or a cylinder.
  \item If $FH<0$ then $\Gamma$ is a locally convex curve and $M$ is a nodoid, which has self-intersections.
  \item If $F=0$ and $H\neq 0$ then $M$ is a sphere.
  \item If $H=0$ and $F\neq 0$ we obtain a catenary which generates an embedded catenoid $M$ with $F>0$ if the normal points down and $F<0$ if the normal points up.
  \item If $H=0$ and $F=0$ then $\Gamma$ is a straight line orthogonal to the $x^1$-axis which generates a hyperplane.
  \item If $M$ touches the $x^1$-axis, then it must be a sphere or a hyperplane.
  \item The curve $\Gamma$ is determined, up to translation along the $x^1$-axis, by the pair $(H,F)$.
\end{enumerate}
\end{prop}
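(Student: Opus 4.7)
The plan is to reduce the system to a one-variable analysis via the first integral and then carry out a case-by-case phase-plane study. Setting $u = x^2 > 0$, I would use the relation $F = u^{n-1}\cos\alpha - Hu^n$ to write
\[
\cos\alpha = g(u) := \frac{F}{u^{n-1}} + Hu,
\]
so the admissible range of $u$ is exactly $\{u>0 : |g(u)| \leq 1\}$ and the geometry of $\Gamma$ is controlled by the graph of $g$. A useful identity, obtained by substituting the first integral back into the third ODE, is
\[
\alpha' = -H + (n-1)\frac{F}{u^n},
\]
whose sign can be read off immediately and which gives the signed planar curvature of $\Gamma$.

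Next I would treat cases according to the signs of $F$ and $H$. When $FH > 0$, both terms of $g$ have the same sign, so $g>0$ and $(x^1)' = \cos\alpha > 0$; hence $\Gamma$ is a monotone graph over the $x^1$-axis. Critical-point analysis of $g$ locates a unique interior minimum at $u_0 = ((n-1)F/H)^{1/n}$, and either $g(u_0) < 1$ (so $u$ oscillates between two turning points and $\Gamma$ is a periodic graph, giving an embedded unduloid) or $g(u_0) = 1$ ($u$ is constant and we obtain a cylinder). When $FH < 0$, $g$ is strictly monotone with range $\mathbb{R}$, so it takes both values $\pm 1$ and $(x^1)'$ changes sign; meanwhile the expression for $\alpha'$ keeps one sign, so $\Gamma$ is locally convex, and the two facts combine to produce the self-intersecting nodoid. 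The degenerate cases are direct: $F=0,\,H\neq 0$ gives $\cos\alpha = Hu$, which integrates to a semicircle of radius $1/|H|$ and hence, upon rotation, to a sphere; $H=0,\,F\neq 0$ gives the higher-dimensional catenary and thus a catenoid; and $H=F=0$ gives $\alpha \equiv \pi/2$ and a hyperplane.

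For item (6), if $\Gamma$ touches the $x^1$-axis then $u \to 0$ along the curve, and the first integral forces $F=0$; cases (3) and (5) then identify $M$ as a sphere or hyperplane. For item (7), once $(H,F)$ is fixed the ODE system is autonomous in $(x^2,\alpha)$ and involves $x^1$ only through its derivative, so standard ODE uniqueness determines every solution up to translation along the $x^1$-axis.

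The main obstacle will be passing rigorously from the phase-plane picture to the global geometric conclusions. Specifically, in the unduloid case one needs to show the $u$-oscillation has finite period and that the resulting graph glues together into a properly embedded rotational hypersurface, not merely a locally periodic arc; and in the nodoid case one must verify that the sign change of $\cos\alpha$ combined with local convexity really produces global self-intersections of $M$ rather than only a bend in $\Gamma$. The algebraic check that evaluating $g$ at its critical point yields the sharp cylinder-versus-unduloid dichotomy is routine but must be executed with care.
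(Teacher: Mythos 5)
This proposition is quoted verbatim from Proposition 4.3 of \cite{HMRR} and the paper supplies no proof of its own, so there is nothing in the text to compare against beyond that citation. Your reduction to the first integral $\cos\alpha=F/u^{n-1}+Hu$ together with the curvature identity $\alpha'=-H+(n-1)F/u^n$ and the case analysis on the signs of $F$ and $H$ is exactly the standard phase-plane argument used in the cited source, and your outline is correct; the only remaining work is the global gluing/self-intersection verification that you yourself flag, which is routine.
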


From this proposition, it is easy to see if $M^n$ is the portion of an unduloid, cylinder, nodoid or a catenoid in a unit Euclidean ball $B^{n+1}$ with revolution axis $x^1$ and moreover $M$ is symmetric with respect to the hyperplane $\{x^1=0\}$, then $M$ is a capillary hypersurface in $B^{n+1}$. In that case we call them Delaunay capillary hypersurfaces in $B^{n+1}$. Furthermore the generalized body $T$ enclosed by $M$ and the wetted part of the sphere has the mass center at the origin. So Theorem \ref{thm1} is applicable.

\section{Instability of capillary hypersurfaces}\label{sec3}

With the preparations above, we can define a ``test function''
\begin{equation}
\phi[\xi]=\langle Y[\xi],N\rangle=\langle -(1+|x|^2)\xi+2\langle \xi,x\rangle x, N\rangle.
\end{equation}
We mention that we will also use the following expression of $\phi[\xi]$
\begin{equation}\label{eq7}
\phi[\xi]=\langle \xi, -(1+|x|^2)N+2\langle x,N\rangle x\rangle.
\end{equation}

Recall the second variational formula
\begin{equation}
\pt^2 E(\phi)=-\int_M L\phi \cdot \phi da+\int_{\pt M} (\phi_\nu-q\phi)\phi ds,
\end{equation}
where $L=\Delta+|\sigma|^2$ and $q=\f{1}{\sin \theta}+\cot \theta \sigma(\nu,\nu)$.

Now we can prove the following lemmas.
\begin{lem}\label{lem3}
$\nu$ is a principal direction for $\sigma$ along $\pt M$. In particular, $D_{\nu} N=-\sigma(\nu,\nu)\nu$.
\end{lem}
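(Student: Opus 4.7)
The plan is to exploit the constant contact angle condition differentially along $\pt M$. The cleanest route is to work with $\la N,\bar N\ra$ rather than $\la \nu,\bar\nu\ra$, since $N$ and $\bar N$ are the normals that couple directly to second fundamental forms.

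First I would observe that at any point of $\pt M$ the four vectors $\nu,N,\bar\nu,\bar N$ all lie in the $2$-plane orthogonal to $T(\pt M)$, with $\{\nu,N\}$ and $\{\bar\nu,\bar N\}$ two orthonormal bases of this plane. Since $\la \nu,\bar\nu\ra=\cos\theta$ is constant, one can write
\begin{equation*}
\bar\nu=\cos\theta\,\nu+\sin\theta\,N,\qquad \bar N=\sin\theta\,\nu-\cos\theta\,N,
\end{equation*}
(signs fixed once and for all by inward $N$ and outward $\bar N$ on a model example), and in particular $\la N,\bar N\ra=-\cos\theta$ is constant along $\pt M$.

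Next, for any $e\in T(\pt M)\subset TM\cap T(\pt B)$, I would differentiate $\la N,\bar N\ra=-\cos\theta$:
\begin{equation*}
0=\la D_e N,\bar N\ra+\la N,D_e\bar N\ra.
\end{equation*}
On the unit sphere the position vector equals $\bar N$, so $D_e\bar N=e$, which is tangent to $M$ and hence orthogonal to $N$. Therefore $\la D_e N,\bar N\ra=0$. Substituting the expansion of $\bar N$ and using $\la D_e N,N\ra=0$ (unit vector), one is left with $\sin\theta\,\la D_e N,\nu\ra=0$; since $\theta\in(0,\pi)$ this forces $\la D_e N,\nu\ra=0$.

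Finally I would translate this to the second fundamental form: $\sigma(\nu,e)=\sigma(e,\nu)=-\la \nu,D_e N\ra=0$ for every $e\in T(\pt M)$, which is exactly the statement that $\nu$ is a principal direction of $\sigma$ along $\pt M$. The identity $D_\nu N=-\sigma(\nu,\nu)\nu$ then follows by decomposing any $X\in TM$ as $X=\la X,\nu\ra\nu+e$ with $e\in T(\pt M)$ and using $\la D_\nu N,X\ra=-\sigma(\nu,X)=-\sigma(\nu,\nu)\la X,\nu\ra$ together with $D_\nu N\in TM$. The only genuinely non-formal step is fixing the signs in the $(\nu,N)$ vs.\ $(\bar\nu,\bar N)$ relations, but once a consistent orientation is chosen the rest is routine.
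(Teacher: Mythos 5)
Your proof is correct and is in substance the same as the paper's: both differentiate the constant contact angle condition along $\partial M$ and use the umbilicity of the sphere (your $D_e\bar N=e$ is precisely the input the paper invokes as $\Pi(\bar\nu,X)=0$) to conclude $\sigma(\nu,e)=0$ for all $e\in T(\partial M)$. The only cosmetic differences are that the paper expands $\sigma(\nu,X)=\langle D_X\nu,N\rangle$ in the rotated frame $\{\bar\nu,\bar N\}$ rather than differentiating $\langle N,\bar N\rangle=\mathrm{const}$, and that your sign conventions differ from the paper's (there $\bar N=\cos\theta\,N+\sin\theta\,\nu$), which, as you note, does not affect the conclusion.
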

\begin{proof}
It suffices to prove that, for $\forall X\in T_p (\pt M)$, $\sigma(\nu, X)=0$. In fact, we have
\begin{align*}
\sigma(\nu,X)&=\la D_X \nu, N\ra\\
               &=\la D_X (\cos \theta \bn+\sin \theta \b{N}), -\sin \theta \bn+\cos\theta \b{N}\ra\\
               &=\la D_X \bn, \b{N}\ra\\
               &=-II(\bn,X)\\
               &=0,
\end{align*}
where we used $\theta$ is constant, $\bn$ and $\b{N}$ are unit vectors, and $\pt B$ is totally umbilical. Thus we complete the proof of Lemma \ref{lem3}.
\end{proof}

\begin{lem}\label{lem1}
Along $\pt M$, there holds
\begin{equation}
\phi_\nu-q\phi=0.
\end{equation}
\end{lem}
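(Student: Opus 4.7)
My plan is to prove the boundary identity by a direct pointwise computation at an arbitrary point of $\pt M$, exploiting the explicit formula for $Y[\xi]$ and the contact-angle decomposition of the frame.

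First I would differentiate $\phi=\langle Y[\xi],N\rangle$ in the $\nu$-direction, using the ambient connection:
\begin{equation*}
\phi_\nu = \langle D_\nu Y[\xi],N\rangle + \langle Y[\xi],D_\nu N\rangle.
\end{equation*}
Lemma~\ref{lem3} immediately simplifies the second term to $-\sigma(\nu,\nu)\langle Y[\xi],\nu\rangle$. For the first term I would differentiate the explicit expression $Y[\xi]=-(1+|x|^2)\xi+2\langle \xi,x\rangle x$ to obtain the (ambient) formula
\begin{equation*}
D_X Y[\xi] = -2\langle x,X\rangle\xi + 2\langle \xi,X\rangle x + 2\langle \xi,x\rangle X,
\end{equation*}
and then specialize to $X=\nu$.

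Next, I would convert all quantities to the orthonormal frame $\{\nu,N\}$ in the normal plane to $\pt M$. On $\pt M$ we have $x\in \SS^n$, so $|x|^2=1$ and $x=\bar N$; by the definition of the contact angle, $\bar N=\sin\theta\,\nu+\cos\theta\,N$ and $\bar\nu=\cos\theta\,\nu-\sin\theta\,N$. With these substitutions, setting $a=\langle\xi,\nu\rangle$ and $b=\langle\xi,N\rangle$, a short calculation gives
\begin{equation*}
\phi = 2\sin\theta\,(a\cos\theta - b\sin\theta),
\qquad
\phi_\nu = 2(a\cos\theta - b\sin\theta)\bigl(1+\cos\theta\,\sigma(\nu,\nu)\bigr).
\end{equation*}
On the other hand, $q = \frac{1}{\sin\theta}+\cot\theta\,\sigma(\nu,\nu)$ multiplied by the above $\phi$ also gives $2(a\cos\theta-b\sin\theta)(1+\cos\theta\,\sigma(\nu,\nu))$, so $\phi_\nu-q\phi=0$.

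The computation is routine once organized, but the main bookkeeping obstacle is keeping the signs and the trigonometric identities straight while switching between the two orthonormal pairs $\{\bar\nu,\bar N\}$ and $\{\nu,N\}$; the key algebraic simplification is the factorization $1-\sin^2\theta=\cos^2\theta$ that lets one pull the common factor $(a\cos\theta-b\sin\theta)$ out of $\phi_\nu$, after which matching with $q\phi$ is immediate. A useful sanity check I would apply before finalizing is that the identity must hold for every $\xi\in\SS^n$, so both $a$-coefficients and $b$-coefficients on the two sides have to agree separately.
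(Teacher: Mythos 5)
Your computation is correct and is essentially the paper's own proof reorganized: the paper differentiates the equivalent expression $\phi=\langle \xi,-(1+|x|^2)N+2\langle x,N\rangle x\rangle$ directly using Lemma~\ref{lem3} and the decomposition $x=\b{N}=\cos\theta\,N+\sin\theta\,\nu$ on $\pt M$, arriving at the same factorization $\phi_\nu=2(1+\cos\theta\,\sigma(\nu,\nu))\langle\xi,\cos\theta\,\nu-\sin\theta\,N\rangle=q\phi$. The only cosmetic difference is that you apply the product rule to $\langle Y[\xi],N\rangle$ and compute $D_\nu Y[\xi]$ separately, which yields identical terms.
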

\begin{proof}
First from \eqref{eq7} and Lemma \ref{lem3} we have
\begin{align*}
\phi_\nu &=\langle \xi, -(1+|x|^2)N+2\langle x,N\rangle x\rangle_\nu\\
         &=\la \xi,-2\la x,\nu\ra N+(1+|x|^2)\sigma(\nu,\nu)\nu-2\la x,\sigma(\nu,\nu)\nu\ra x+2\la x,N\ra \nu\ra\\
         &=2\la \xi,-\la x,\nu\ra N+\sigma(\nu,\nu)(\nu-\la x,\nu\ra x)+\la x,N\ra \nu\ra,
\end{align*}
where in the third line we used $|x|=1$ along $\pt M$.

Next noticing that $x=\b{N}=\cos \theta N+\sin\theta \nu$, we get
\begin{align*}
\phi_\nu&=2\la \xi,-\sin\theta N+\sigma(\nu,\nu)(\nu-\sin \theta (\cos \theta N+\sin\theta \nu))+\cos\theta \nu\ra\\
        &=2\la \xi,(\sigma(\nu,\nu)\cos\theta+1)(\cos\theta \nu-\sin\theta N)\ra.
\end{align*}
On the other hand, there holds
\begin{align*}
q\phi&=(\f{1}{\sin\theta}+\cot \theta \sigma(\nu,\nu))\la \xi,-(1+|x|^2)N+2\la x,N\ra x\ra\\
     &=(\f{1}{\sin\theta}+\cot \theta \sigma(\nu,\nu))2\la \xi,-N+\cos \theta (\cos \theta N+\sin\theta \nu)\ra\\
     &=(\f{1}{\sin\theta}+\cot \theta \sigma(\nu,\nu))2\la \xi,-\sin^2\theta N+\cos \theta \sin\theta \nu \ra\\
     &=(1+\cos \theta \sigma(\nu,\nu))2\la \xi,-\sin \theta N+\cos \theta \nu \ra,
\end{align*}
where again in the second line we used $|x|=1$ along $\pt M$.

Hence we obtain
\begin{equation*}
\phi_\nu-q\phi=0.
\end{equation*}

\end{proof}

The next lemma, which indicates the geometric meaning of Lemma \ref{lem1}, may have its own interest. Thus we also include it here.

\begin{lem}
Under the flow $f_t$, there holds
\begin{equation}
\f{d}{dt}\bigg|_{t=0} \theta(t)=\phi_\nu-q\phi.
\end{equation}
In particular, since $f_t$ is conformal (angle preserving), $\phi_\nu-q\phi=0$.
\end{lem}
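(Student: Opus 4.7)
The plan is to establish the general formula $\dot\theta(0)=f_\nu-qf$ for the $t$-derivative of the contact angle under an arbitrary admissible variation of $M$ with normal component $f=\la Y,N\ra$, and then apply it to $Y=Y[\xi]$. Once that formula is in hand, the ``in particular'' assertion follows immediately from the fact that each $\varphi_{t\xi}$ is a conformal transformation of $\R^{n+1}$: it preserves angles, so $\theta(t)\equiv\theta$, forcing $\dot\theta(0)=0$ and hence $\phi_\nu-q\phi=0$, recovering Lemma~\ref{lem1}.

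To derive the general formula I would differentiate $\cos\theta(t)=\la N(t),\bar N(t)\ra$ at $t=0$ and compute each piece. Because $\bar N$ coincides with the position vector on the unit sphere $\pt B$ and the variation keeps $\pt M$ on $\pt B$, one has $\dot{\bar N}(0)=Y|_{\pt M}$, so $\la N,\dot{\bar N}\ra=f$. For $\dot N(0)$ I would use the standard identity $\dot N=-\nabla^M f-A(Y^T)$, where $A$ denotes the shape operator of $M$ and $Y^T$ the tangential part of $Y$; this comes from differentiating $\la N(t),dX_t(e)\ra=0$ and using the Weingarten formula $D_eN=-A(e)$. Writing $Y^T=u\nu+Y^{\pt M}$ along $\pt M$ with $Y^{\pt M}\in T(\pt M)$, the admissibility condition $\la Y,\bar N\ra=0$ combined with the decomposition $\bar N=\cos\theta\,N+\sin\theta\,\nu$ forces $u=-f\cot\theta$.

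Invoking Lemma~\ref{lem3}, so that $A\nu=\sigma(\nu,\nu)\nu$ and in particular $\sigma(\nu,Y^{\pt M})=0$, one obtains $\la\nu,\dot N\ra=-f_\nu-u\,\sigma(\nu,\nu)$, hence
\begin{equation*}
-\sin\theta\,\dot\theta=\la\dot N,\bar N\ra+\la N,\dot{\bar N}\ra=-\sin\theta\bigl(f_\nu+u\,\sigma(\nu,\nu)\bigr)+f.
\end{equation*}
Substituting $u=-f\cot\theta$ and recognizing $q=\tfrac{1}{\sin\theta}+\cot\theta\,\sigma(\nu,\nu)$ yields the desired $\dot\theta=f_\nu-qf$. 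Applying this with $f=\phi=\phi[\xi]$ proves the displayed identity, and the second assertion is then a one-line consequence of the conformality of $f_t$.

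The main technical step, and the most likely source of error, is the careful derivation of $\dot N=-\nabla^M f-A(Y^T)$ at a boundary point together with the bookkeeping between the two orthonormal bases $\{N,\nu\}$ and $\{\bar N,\bar\nu\}$ of the plane normal to $\pt M$ in $\R^{n+1}$. The essential simplification is Lemma~\ref{lem3}, which both kills the otherwise awkward mixed term $\sigma(\nu,Y^{\pt M})$ and forces the right-hand side to reassemble precisely into the boundary integrand $f_\nu-qf$ appearing in $\pt^2 E$.
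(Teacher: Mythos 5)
Your argument is correct, and it reaches the identity $\dot\theta=f_\nu-qf$ by a route that is more self-contained than the paper's. The paper differentiates $\cos\theta(t)=\la \nu,\bn\ra$ and feeds in the formulas for $\nu'$ and $\bn'$ quoted from the appendix of Ros--Souam, together with the auxiliary identities $Y_0=Y_1-\cot\theta\,\phi\nu$, $Y=Y_1-\f{1}{\sin\theta}\phi\bn$ and $\sigma(Y_1,\nu)+\Pi(Y_1,\bn)=0$; the cancellation there ultimately rests on the same two facts you use, namely Lemma~\ref{lem3} and the umbilicity of $\SS^n$. You instead differentiate the equivalent quantity $\cos\theta(t)=\la N(t),\b{N}(t)\ra$, which lets you exploit the special feature of the unit ball that $\b{N}=x$ along $\pt B$, so $\dot{\b{N}}=Y$ and $\la N,\dot{\b{N}}\ra=f$ comes for free; the only nontrivial input is then the standard first variation of the normal, $\dot N=-\nabla^M f-A(Y^T)$, which you correctly derive from $\la N(t),dX_t(e)\ra=0$. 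The bookkeeping checks out: $u=-f\cot\theta$ follows from $\la Y,\b{N}\ra=0$, Lemma~\ref{lem3} kills $\sigma(\nu,Y^{\pt M})$, and the terms reassemble into $f_\nu-\bigl(\f{1}{\sin\theta}+\cot\theta\,\sigma(\nu,\nu)\bigr)f=f_\nu-qf$ exactly as claimed. What your approach buys is independence from the external variational formulas of \cite{RS}; what the paper's buys is that it works verbatim in a general ambient container where $\b{N}$ is not the position vector. The concluding step, that conformality of $f_t$ forces $\dot\theta=0$ and hence $\phi_\nu-q\phi=0$, is the same in both.
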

\begin{proof}
Following \cite{RS}, we denote by a ``prime'' the convariant derivative $\f{D}{dt}\big|_{t=0}$. Also by the appendix of \cite{RS}, we have
\begin{align*}
\nu'&=(\f{\pt \phi}{\pt \nu}+\sigma(Y_0,\nu))N+\phi S_0(\nu)-\phi\sigma(\nu,\nu)\nu-S_1(Y_1)+\cot \theta \td{\nabla}\phi,\\
\bn'&=-\Pi(Y,\bn)\b{N}-S_2(Y_1)+\f{1}{\sin \theta}\td{\nabla \phi},
\end{align*}
where $\td{\nabla}$ denotes the gradient on $\pt M$, $Y_0$ (resp. $Y_1$) the tangent part of the variational vector field $Y$ of $M$ (resp. to $\pt M$), $S_0$ the shape operator of $M$ in $\R^{n+1}$ with respect to $N$, and $S_1$ (resp. $S_2$) the shape operator of $\pt M$ in $M$ (resp. $\pt B$) with respect to $\nu$ (resp. $\bn$).

Note that $\cos \theta(t)=\la \nu,\bn\ra$, which implies
\begin{equation*}
-\sin \theta \f{d}{dt}\bigg|_{t=0} \theta(t)=\la \nu',\bn\ra+\la \nu,\bn'\ra.
\end{equation*}
Taking into account that
\begin{align*}
\bn&=-\sin \theta N+\cos \theta \nu,\\
\b{N}&=\cos\theta N+\sin \theta \nu,
\end{align*}
we have
\begin{align*}
-\sin \theta \f{d}{dt}\bigg|_{t=0} \theta(t) & =\la (\f{\pt \phi}{\pt \nu}+\sigma(Y_0,\nu))N+\phi S_0(\nu)-\phi\sigma(\nu,\nu)\nu,-\sin \theta N+\cos \theta \nu\ra\\
                                          &+\la \nu,-\Pi(Y,\bn)(\cos\theta N+\sin \theta \nu)\ra \\
                                          &= -\sin \theta(\f{\pt \phi}{\pt \nu}+\sigma(Y_0,\nu))-\sin\theta\Pi(Y,\bn),
\end{align*}
or
\begin{equation}\label{eq1}
\f{d}{dt}\bigg|_{t=0} \theta(t) = \f{\pt \phi}{\pt \nu}+\sigma(Y_0,\nu)+\Pi(Y,\bn).
\end{equation}
Again from the appendix of \cite{RS}, there hold
\begin{gather*}
Y_0=Y_1-\cot \theta \phi \nu,\nonumber\\
Y=Y_1-\f{1}{\sin \theta}\phi\bn,\nonumber\\
\sigma(Y_1,\nu)+\Pi(Y_1,\bn)=0.
\end{gather*}

Now plugging these equalities into \eqref{eq1}, the lemma follows immediately.

\end{proof}

\begin{lem}
\begin{equation}
L\phi=-2n\langle \xi,N+Hx\rangle.
\end{equation}
\end{lem}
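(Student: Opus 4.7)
My plan is to split $\phi$ into ingredients whose Jacobi-operator behavior on a CMC hypersurface is classical, and then to observe that the first-order cross terms produced by the product rule cancel by symmetry of the second fundamental form. The identity will be seen as the conformal-Killing analogue of the familiar fact that $\langle\xi, N\rangle$ lies in the kernel of $L$ for the translation Killing field $\xi$: because $Y[\xi]$ is only conformal Killing (with factor $\psi = 2\langle\xi, x\rangle$, verified by the direct computation $\pt_i Y^j + \pt_j Y^i = 4\langle \xi, x\rangle \delta_{ij}$, consistent with the pointwise conformal factor of $f_t$ computed in Section~\ref{sec2}), $\phi$ is no longer Jacobi, and the obstruction on the right-hand side is exactly $-2n\langle \xi, N+Hx\rangle$.

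Concretely I would write
\[
\phi = -(1+|x|^2)\phi_1 + 2\langle\xi, x\rangle u,
\]
where $\phi_1:=\langle\xi, N\rangle$ is the normal part of the constant vector field $\xi$ and $u:=\langle x, N\rangle$ is the support function. On any CMC hypersurface in $\R^{n+1}$, one has the standard identities
\[
\Delta\phi_1+|\sigma|^2\phi_1=0, \quad \Delta u+|\sigma|^2 u=-nH, \quad \Delta|x|^2=2n+2nHu, \quad \Delta\langle\xi, x\rangle=nH\phi_1,
\]
all of which follow from $\Delta x=nHN$ and $\Delta N=-|\sigma|^2 N$ (the latter using the Codazzi equation together with $\nabla H=0$). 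Expanding $\Delta\phi$ by Leibniz, substituting, and then adding $|\sigma|^2\phi$, the pieces $(1+|x|^2)|\sigma|^2\phi_1$ and $2\langle\xi,x\rangle|\sigma|^2 u$ produced from the $\Delta\phi_1$ and $\Delta u$ factors cancel pairwise against the matching pieces coming from $|\sigma|^2\phi$, and the two occurrences of $\pm 2nHu\phi_1$ (produced from $\Delta|x|^2$ and from $\Delta\langle\xi, x\rangle$) also cancel.

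The crux is the cancellation of the first-order cross terms. Writing gradients on $M$ in an orthonormal frame $\{e_i\}$ yields $(\nabla\phi_1)_i = -\sum_j h_{ij}\langle\xi, e_j\rangle$ and $(\nabla u)_i = -\sum_j h_{ij}\langle x, e_j\rangle$, together with $\nabla|x|^2 = 2x^T$ and $\nabla\langle\xi, x\rangle = \xi^T$. Thus
\[
-2\nabla|x|^2 \cdot \nabla\phi_1 + 4\nabla\langle\xi, x\rangle \cdot \nabla u = 4\sum_{i,j} h_{ij}\langle x, e_i\rangle\langle\xi, e_j\rangle - 4\sum_{i,j}h_{ij}\langle\xi, e_i\rangle\langle x, e_j\rangle = 0
\]
by the symmetry $h_{ij}=h_{ji}$. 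The surviving residue is exactly $-2n\phi_1 - 2nH\langle\xi, x\rangle = -2n\langle\xi, N+Hx\rangle$. I expect the main obstacle to be pure bookkeeping: tracking the many zero- and first-order contributions and verifying that everything but the symmetric-versus-antisymmetric cross-term cancellation and the two surviving residues drops out. Conceptually the calculation is rigid, being dictated by the fact that $Y[\xi]$ is a conformal Killing field of $\R^{n+1}$.
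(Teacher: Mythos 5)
Your proposal is correct and is essentially the paper's own computation: the paper also expands $\Delta\phi$ by Leibniz using $\Delta x=nHN$ and $\Delta N=-|\sigma|^2N$ (the latter via Codazzi with $\nabla H=0$) and relies on exactly the same cancellation of the first-order cross terms by the symmetry $h_{ij}=h_{ji}$. The only cosmetic difference is that you organize the calculation through the scalar blocks $\phi_1=\langle\xi,N\rangle$, $u=\langle x,N\rangle$, $|x|^2$, $\langle\xi,x\rangle$, whereas the paper applies $\Delta$ to the vector-valued expression $-(1+|x|^2)N+2\langle x,N\rangle x$ and then pairs with $\xi$.
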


\begin{proof}
The proof is a direct calculation using moving frame method. This method is very powerful in differential geometry. Take an orthonormal basis $\{e_i,i=1,\cdots,n;e_{n+1}=N \}$. Then we have the structure equations: (see e.g. \cite{CL})
\begin{gather*}
dx=\sum_{i=1}^n \omega_i e_i,\\
de_i=\sum_{j=1}^n \omega_{ij}e_j+\sum_{j=1}^n h_{ij}\omega_j e_{n+1},\\
de_{n+1}=-\sum_{i,j=1}^n h_{ij} \omega_ie_j,
\end{gather*}
where $\omega_i$ is the dual forms and $\omega_{ij}$ the connection forms. Thus there holds
\begin{align}
\Delta \phi&=\Delta \langle \xi, -(1+|x|^2)N+2\langle x,N\rangle x \rangle \nonumber\\
           &=\la \xi,-\Delta((1+|x|^2)N)+2\Delta (\la x,N\ra x)\ra \nonumber\\
           &=\la \xi,-(\Delta |x|^2 \cdot N+2 \sum_{i=1}^n(|x|^2)_{,i} N_{,i}+(1+|x|^2)\Delta N) \nonumber\\
           &+2(\Delta \la x,N\ra \cdot x+2\sum_{i=1}^n\la x,N\ra_{,i}x_{,i}+\la x,N\ra \Delta x)\ra.\label{eq6}
\end{align}
Note that
\begin{align*}
\Delta |x|^2&=2\la x,\Delta x\ra+2|\nabla x|^2\\
            &=2nH\la x,N\ra+2n,\\
\sum_{i=1}^n(|x|^2)_{,i}N_{,i}&=-2\sum_{i,j=1}^n\la x,e_i\ra h_{ij}e_j.
\end{align*}
And using Codazzi equation $\sum_{i=1}^nh_{ij,i}=\sum_{i=1}^n h_{ii,j}=nH_{,j}=0$ we have
\begin{align*}
\Delta N&=\sum_{i=1}^n N_{,ii}=\sum_{i,j=1}^n (-h_{ij}e_j)_{,i}\\
        &=-\sum_{i,j=1}^n h_{ij}h_{ij}N=-|\sigma|^2N.
\end{align*}
Moreover we can get
\begin{align*}
\Delta \la x,N\ra&=\la \Delta x,N\ra+2\sum_{i=1}^n \la x_{,i},N_{,i}\ra +\la x,\Delta N\ra\\
                 &=\la nHN,N\ra+2\sum_{i,j=1}^n\la e_i,-h_{ij}e_j\ra+\la x,-|\sigma|^2 N\ra\\
                 &=-nH-|\sigma|^2\la x,N\ra,\\
\sum_{i=1}^n \la x,N\ra_{,i}x_{,i}&=\sum_{i,j=1}^n\la x,-h_{ij}e_j\ra e_i=\sum_{i,j=1}^n-h_{ij}\la x,e_j\ra e_i.
\end{align*}
Now substituting all these terms into \eqref{eq6} gives rise to
\begin{align*}
\Delta \phi&=\la \xi,-((2nH\la x,N\ra+2n) \cdot N-4\sum_{i,j=1}^n\la x,e_i\ra h_{ij}e_j-(1+|x|^2)|\sigma|^2N) \\
           &+2((-nH-|\sigma|^2\la x,N\ra) \cdot x-2\sum_{i,j=1}^n h_{ij}\la x,e_j\ra e_i+\la x,N\ra nHN)\ra\\
           &=\la \xi,(-2n+(1+|x|^2)|\sigma|^2)N-2(nH+|\sigma|^2\la x,N\ra)x\ra\\
           &=\la \xi, -2n(N+Hx)\ra-|\sigma|^2\phi.
\end{align*}

Therefore,
\begin{equation*}
L\phi=\Delta \phi+|\sigma|^2\phi=-2n\langle \xi,N+Hx\rangle.
\end{equation*}

\end{proof}

Thus we obtain
\begin{equation*}
\pt^2 E(\phi)=-2n\int_M \langle \xi,N+Hx\rangle\cdot \langle \xi, (1+|x|^2)N-2\langle x,N\rangle x\rangle da.
\end{equation*}

To analyze $\pt^2 E(\phi)$, we define a quadratic form
\begin{equation}
Q(\xi_1,\xi_2)=-2n\int_M \langle \xi_1,N+Hx\rangle\cdot \langle \xi_2, (1+|x|^2)N-2\langle x,N\rangle x \rangle da,
\end{equation}
for $\forall\, \xi_1,\xi_2\in \SS^n$. Denote by $\{\pt_A\}_{A=1}^{n+1}$ the standard coordinate vectors in $\R^{n+1}$. Then we have the following lemma.
\begin{lem}\label{lem2}
$Q$ has the following properties.
\begin{enumerate}
  \item $Q$ is symmetric.
  \item $tr Q=\sum\limits_{A=1}^{n+1} Q(\pt_A,\pt_A)\leq 0$ with equality if and only if $|x|=const$ on M.
\end{enumerate}
\end{lem}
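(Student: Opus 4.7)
My plan is to address symmetry and the trace estimate separately, exploiting heavily the Robin-type boundary condition $\phi_\nu = q\phi$ for each $\phi[\xi]$ that was just established in Lemma~\ref{lem1}.

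For (1), I would rewrite $Q$ using the formula $L\phi[\xi] = -2n\la \xi,N+Hx\ra$ from the preceding lemma:
\[
Q(\xi_1,\xi_2) = -\int_M L\phi[\xi_1]\cdot\phi[\xi_2]\, da.
\]
Symmetry then reduces to self-adjointness of $L = \Delta + |\sigma|^2$ on functions satisfying the Robin boundary condition, which is immediate from Green's identity:
\[
\int_M\bigl(L\phi[\xi_1]\,\phi[\xi_2] - \phi[\xi_1]\,L\phi[\xi_2]\bigr)da = \int_{\pt M}\bigl(\phi[\xi_1]_\nu\phi[\xi_2] - \phi[\xi_1]\phi[\xi_2]_\nu\bigr)ds,
\]
which vanishes since each $\phi[\xi_i]$ satisfies $\phi_\nu = q\phi$.

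For (2), I would first collapse the sum via the elementary identity $\sum_A\la\pt_A,u\ra\la\pt_A,v\ra = \la u,v\ra$ to obtain
\[
\operatorname{tr} Q = -2n\int_M\la N+Hx,\,(1+|x|^2)N - 2\la x,N\ra x\ra\, da,
\]
and then expand the inner product (using $|N|=1$) to produce an integrand of the shape
\[
(1+|x|^2) - 2\la x,N\ra^2 + H\la x,N\ra(1-|x|^2).
\]
To integrate this explicitly I would invoke two Minkowski-type identities produced by applying the divergence theorem on $M$ to the tangent fields $x^T$ and $|x|^2 x^T$, namely $\operatorname{div}_M(x^T) = n + nH\la x,N\ra$ (this is essentially the computation of $\Delta|x|^2$ carried out in the proof of the previous lemma) and $\operatorname{div}_M(|x|^2 x^T) = 2|x^T|^2 + |x|^2(n + nH\la x,N\ra)$. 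Since $|x|=1$ and $\la x,\nu\ra = \sin\theta$ along $\pt M$, the boundary integrals evaluate cleanly, and when the two resulting identities are substituted into the expression for $\operatorname{tr} Q$ all contributions involving $|M|$ and $|\pt M|$ cancel, leaving
\[
\operatorname{tr} Q = -4(n+1)\int_M|x^T|^2\, da.
\]
Since $\nabla^M|x|^2 = 2x^T$, the sign assertion and the equality case $|x|=\mathrm{const}$ both follow immediately.

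The main obstacle I anticipate is of the ``find the right algebraic identity'' variety: the trace integrand is not pointwise signed, because the cross-term $H\la x,N\ra(1-|x|^2)$ can have either sign. The essential trick is the second Minkowski identity, with the precise weight $|x|^2$, which is exactly what is needed so that the boundary terms on the two identities match up and cancel, leaving the manifestly non-negative quantity $|x^T|^2$. Without this specific choice one only recovers inequalities contaminated by boundary data.
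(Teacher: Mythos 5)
Your proof is correct and follows essentially the same route as the paper: Green's identity plus the Robin condition $\phi_\nu=q\phi$ for symmetry, and for the trace the same expansion of the inner product combined with the identity $\operatorname{div}_M(x^T)=n(1+H\la x,N\ra)$ (equivalently $\Delta|x|^2=2n(1+H\la x,N\ra)$) and an integration by parts that uses $|x|=1$ on $\pt M$. The only difference is that you retain the $2|x^T|^2$ term that the paper discards as an inequality, which yields the slightly sharper exact identity $\operatorname{tr}Q=-4(n+1)\int_M|x^T|^2\,da$ with the same equality case.
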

\begin{proof}
(1) First we prove $Q$ is symmetric. Note that in fact $Q$ is defined as
\begin{equation*}
Q(\xi_1,\xi_2)=-\int_M L(\phi[\xi_1])\cdot \phi[\xi_2] da.
\end{equation*}
Then Green's formula implies
\begin{align*}
Q(\xi_1,\xi_2)&=-\int_M \phi[\xi_1]\cdot L(\phi[\xi_2]) da+\int_{\pt M} \left(\phi[\xi_1](\phi[\xi_2])_\nu-(\phi[\xi_1])_\nu\phi[\xi_2]\right)ds.\\
\end{align*}
But Lemma \ref{lem1} yields $(\phi[\xi_i])_\nu=q\phi[\xi_i]$, $i=1,2$. So the boundary term vanishes and then
\begin{equation*}
Q(\xi_1,\xi_2)=Q(\xi_2,\xi_1).
\end{equation*}

(2) Next we calculate $tr Q$.
\begin{align*}
tr Q&=\sum\limits_{A=1}^{n+1} Q(\pt_A,\pt_A)\\
    &=-2n\int_M \sum_{A=1}^{n+1} \langle \pt_A,N+Hx\rangle\cdot \langle \pt_A, (1+|x|^2)N-2\langle x,N\rangle x\rangle da\\
    &=-2n\int_M \la N+Hx,(1+|x|^2)N-2\langle x,N\rangle x\ra da\\
    &=-2n\int_M \left(H\la x,N\ra (1-|x|^2)+1+|x|^2-2\la x,N\ra^2\right)da\\
    &\leq -2n\int_M (H\la x,N\ra +1)(1-|x|^2)da.
\end{align*}
Also we have $\Delta |x|^2=2n(H\la x,N\ra +1)$. Consequently,
\begin{align*}
tr Q&\leq -\int_M \Delta |x|^2\cdot (1-|x|^2)da\\
    &=\int_M \nabla |x|^2 \cdot \nabla (1-|x|^2)da-\int_{\pt M} \f{\pt |x|^2}{\pt \nu}(1-|x|^2)ds\\
    &=-\int_M |\nabla (|x|^2)|^2da\\
    &\leq 0,
\end{align*}
where we have used $|x|=1$ on $\pt M$ to remove the boundary term. And it is easy to see $tr Q=0$ if and only if $|x|=const$.

So we complete the proof.
\end{proof}

Thus $Q$ has at least one negative eigenvalue. But on the other hand,
\begin{equation}
div_{\R^{n+1}}Y[\xi]=\sum_{A=1}^{n+1}\la D_{\pt_A}Y[\xi],\pt_A\ra=2(n+1)\langle \xi,x\rangle,
\end{equation}
which by integration implies
\begin{align}
\int_M \phi da&=\int_M\langle Y[\xi],N\rangle da\nonumber\\
              &=-\int_T div_{\R^{n+1}}Y[\xi] dv+\int_\Omega \langle Y[\xi],\b{N} \rangle da\nonumber\\
              &=-2(n+1)\int_T \langle \xi,x\rangle dv.
\end{align}
So generally $\int_M \phi da \neq 0$. That means $\phi[\xi]$ is not a test function.

However, under the hypothesis of Theorem \ref{thm1} that the mass center of $T$ is at the origin, we have $\int_M \phi da=-2(n+1)\int_T \langle \xi,x\rangle dv=0$ for $\forall\, \xi \in\SS^n$. So if we choose $\xi$ as an eigenvector corresponding to the negative eigenvalue of $Q$, we have $\pt^2 E(\phi[\xi])=Q(\xi,\xi)<0$, which implies that $M$ is unstable. This completes the proof of Theorem \ref{thm1}.

\section{Other applications and question}\label{sec4}

In this section we shall give several applications of the above argument and propose a conjecture on the topic.

\subsection{Another criteria for instability}

The following proposition is an immediate result.
\begin{prop}
If the quadratic form $Q$ has two negative eigenvalues, then $M$ is unstable.
\end{prop}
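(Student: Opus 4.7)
The plan is to combine the symmetric bilinear form $Q$ on $\mathbb{R}^{n+1}$ with the single linear constraint that enforces admissibility, then use a dimension count.

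First, recall from the proof of Lemma \ref{lem2} and the identity $(\phi[\xi])_\nu = q\phi[\xi]$ (Lemma \ref{lem1}) that
\[
Q(\xi,\xi)=-\int_M L(\phi[\xi])\,\phi[\xi]\,da+\int_{\partial M}\bigl((\phi[\xi])_\nu-q\phi[\xi]\bigr)\phi[\xi]\,ds=\partial^2 E(\phi[\xi]),
\]
so the value of the second variation along the variation determined by $Y[\xi]$ is exactly $Q(\xi,\xi)$. The only obstruction to concluding instability from $Q(\xi,\xi)<0$ is the admissibility (volume-preserving) condition $\int_M \phi[\xi]\,da=0$, i.e.\ $\phi[\xi]\in\mathcal{F}$.

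Next, I would encode this admissibility as a single linear condition on $\xi$. From the divergence computation already performed in Section \ref{sec3},
\[
\int_M \phi[\xi]\,da=-2(n+1)\int_T\langle\xi,x\rangle\,dv=\langle \xi,p\rangle,
\]
where $p:=-2(n+1)\int_T x\,dv\in\mathbb{R}^{n+1}$ is a fixed vector (proportional to the mass center of $T$). Thus $\ell(\xi):=\int_M\phi[\xi]\,da$ is a linear functional of $\xi\in\mathbb{R}^{n+1}$, whose kernel has codimension at most one.

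Now the hypothesis says $Q$ admits a $2$-dimensional subspace $V\subset\mathbb{R}^{n+1}$ on which $Q$ is negative definite (spanned by two eigenvectors for negative eigenvalues, using symmetry of $Q$ from Lemma \ref{lem2}(1)). Since $\ell|_V$ is a linear functional on a $2$-dimensional space, its kernel has dimension at least $1$. Pick any nonzero $\xi_0\in V\cap\ker\ell$. Then $\phi[\xi_0]\in\mathcal{F}$ and
\[
\partial^2E(\phi[\xi_0])=Q(\xi_0,\xi_0)<0,
\]
so $M$ is unstable.

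The argument is essentially a linear-algebra observation on top of the machinery already built in Section \ref{sec3}; there is no real obstacle, since the only subtlety---that $\phi[\xi]$ automatically satisfies the boundary Robin-type condition making $Q(\xi,\xi)$ equal to the second variation without boundary corrections---was already handled in Lemma \ref{lem1}. The mild point to check is just that $\dim V=2$ together with $\mathrm{codim}\,\ker\ell\le 1$ forces a nontrivial intersection, which is immediate.
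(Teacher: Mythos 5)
Your proposal is correct and is essentially the paper's own argument: the paper likewise observes that the admissibility condition $\int_M\phi[\xi]\,da=-2(n+1)\int_T\langle\xi,x\rangle\,dv$ is linear in $\xi$, and picks a nonzero combination $c_1\xi_1+c_2\xi_2$ of the two negative eigenvectors satisfying it, on which $Q$ is still negative. Your phrasing via the kernel of the linear functional $\ell$ restricted to the $2$-dimensional negative subspace is just a slightly more abstract packaging of the same dimension count.
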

\begin{proof}
Assume $Q$ is diagonalized such that $\xi_1$ and $\xi_2$ are the eigenvectors corresponding to the two negative eigenvalues. Then for real numbers $c_1$ and $c_2$ with $c_1^2+c_2^2\neq 0$,
\begin{equation}\label{eq5}
Q(c_1\xi_1+c_2\xi_2,c_1\xi_1+c_2\xi_2)=c_1^2Q(\xi_1,\xi_1)+c_2^2Q(\xi_2,\xi_2)<0.
\end{equation}
On the other hand,
\begin{align*}
\int_M \phi[c_1\xi_1+c_2\xi_2] da&=-2(n+1)\int_T \langle c_1\xi_1+c_2\xi_2,x\rangle dv\\
                                 &=-2(n+1)(c_1\int_T \langle \xi_1,x\rangle dv+c_2\int_T \langle \xi_2,x\rangle dv).
\end{align*}
So we can always choose proper $c_1$ and $c_2$ with $c_1^2+c_2^2\neq 0$ such that $\int_M \phi[c_1\xi_1+c_2\xi_2] da=0$. Then using $\phi[c_1\xi_1+c_2\xi_2]$ as a test function, from \eqref{eq5} we know $M$ is unstable.

\end{proof}
The significance of the above proposition is as follows. For a given concrete capillary hypersurface $M$ in $B^{n+1}$, the quadratic form $Q$ is computable in principle. Then if $Q$ has two negative eigenvalues, we can assert its instability. Also from this proposition we know that for hyperplanes and spherical caps $Q$ has exactly one negative eigenvalue.

\subsection{The mass center of minimal submanifolds with free boundary}

By free boundary we mean that $M$ intersects $\pt B^{n+1}$ orthogonally, that is, $\nu=x$ along $\pt M$. By analyzing the vector field $Y[\xi]$, we have the following proposition.
\begin{prop}\label{prop1}
The mass center of a minimal submanifold $M^k$ with free boundary in a Euclidean ball is at the origin.
\end{prop}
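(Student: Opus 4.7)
The plan is to test the divergence theorem on $M^k$ against the conformal vector field $Y[\xi] = -(1+|x|^2)\xi + 2\la \xi, x\ra x$ for each $\xi\in\SS^n$, in the same spirit as the proof of Theorem \ref{thm1}. The key preliminary fact is that along any $k$-dimensional submanifold,
$$div_M Y[\xi] = 2k\la \xi, x\ra.$$
Indeed, for any orthonormal basis $\{e_i\}_{i=1}^k$ of $T_pM$, direct computation gives $\la D_{e_i}Y[\xi], e_i\ra = -2\la x,e_i\ra\la \xi,e_i\ra + 2\la \xi,e_i\ra\la x,e_i\ra + 2\la \xi,x\ra = 2\la \xi,x\ra$, and summing over $i$ yields the claim. (Equivalently, this is the conformality of $Y[\xi]$ in $\R^{n+1}$, restricted to $T_pM$; note that this step already uses only that $M$ has dimension $k$, not any geometric hypothesis.)

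Next, since $M$ is minimal, write $Y[\xi] = Y^T + Y^\perp$ along $M$; then $div_M Y[\xi] = div_M Y^T - k\la Y^\perp, \vec H\ra = div_M Y^T$, so the usual divergence theorem applied to the tangential field $Y^T$ on $M$ produces
$$2k\int_M \la \xi, x\ra\, da = \int_{\pt M} \la Y[\xi], \nu\ra\, ds.$$

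It remains to show the boundary term vanishes. The free boundary assumption forces $\nu = x$ along $\pt M$, and of course $|x|=1$ there, so
$$Y[\xi]\big|_{\pt M} = -2\xi + 2\la \xi, x\ra x, \qquad \la Y[\xi], \nu\ra = -2\la \xi,x\ra + 2\la \xi,x\ra = 0.$$
Therefore $\int_M \la \xi, x\ra\, da = 0$ for every $\xi\in\SS^n$, which is equivalent to $\int_M x\, da = 0$, i.e.\ the mass center of $M$ lies at the origin. No step is a serious obstacle; the whole argument rests on the two features of $Y[\xi]$ already exploited in Theorem \ref{thm1}, namely that its ambient divergence is $2(n+1)\la \xi,x\ra$ (hence its $M$-divergence is $2k\la \xi,x\ra$) and that it is tangent to $\SS^n$ along $\pt B^{n+1}$, which combined with the free boundary condition kills the boundary integral.
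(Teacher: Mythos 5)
Your proposal is correct and follows essentially the same route as the paper: compute the tangential divergence of $Y[\xi]$ along $M$ (getting $2k\la\xi,x\ra$), use minimality to drop the $\la Y^\perp,\vec H\ra$ term, and kill the boundary integral via the free boundary condition $\nu=x$ together with the tangentiality of $Y[\xi]$ to $\SS^n$. The only cosmetic difference is that you verify $\la Y[\xi],x\ra=0$ on $\pt M$ by direct computation rather than citing the tangentiality already established in Section \ref{sec2}.
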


\begin{proof}
Along $M^k$ choose the orthonormal basis $\{e_i,i=1,\cdots,k;e_\alpha, \alpha=k+1,\cdots,n+1\}$ such that $\{e_i,i=1,\cdots, k\}\subset TM$. Then we have
\begin{align*}
div_M Y[\xi]^T=div_M(Y[\xi]-\sum_\alpha \la Y[\xi],e_\alpha \ra e_\alpha)=2k\la\xi,x\ra+\la Y[\xi], k\vec{H}\ra=2k\la\xi,x\ra.
\end{align*}
By divergence theorem, we have
\begin{equation*}
2k\int_M\la\xi,x\ra da=\int_{\pt M} \la Y[\xi]^T,\nu\ra ds=\int_{\pt M} \la Y[\xi],x\ra ds=0,
\end{equation*}
where we have used the fact $Y[\xi]$ is tangential to $\pt B^{n+1}$.
\end{proof}

This proposition shows that minimal submanifolds with free boundary have some symmetry. Comparing with it, we mention two other properties of $M^k$:
\begin{enumerate}
  \item The mass center of the boundary $\pt M$ is at the origin (a simple argument).
  \item The volume of $M$ has a lower bound $|M^k|\geq |B^k|$, where $B^k$ is a $k$-dimensional unit ball (\cite{B,FS,RV}).
\end{enumerate}

\subsection{Stable immersed closed CMC hypersurfaces in $\R^{n+1}$}
At last we give a new proof of a theorem by Barbosa and do Carmo.
\begin{thm}[\cite{BdC}]
The only stable immersed closed hypersurface of constant mean curvature in $\R^{n+1}$ is the round sphere.
\end{thm}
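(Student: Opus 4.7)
The plan is to transplant the machinery from the proof of Theorem \ref{thm1} to all of $\R^{n+1}$. The conformal field $Y[\xi]=-(1+|x|^2)\xi+2\la\xi,x\ra x$, the function $\phi[\xi]=\la Y[\xi],N\ra$, and the pointwise identity $L\phi[\xi]=-2n\la\xi,N+Hx\ra$ all make sense for any immersion $x:M^n\to\R^{n+1}$ and never invoked a containing ball. What must be checked is: (i) that $\phi[\xi]$ can be made into an admissible test function, and (ii) that the trace bound for the associated quadratic form $Q$ survives when there is no boundary.

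For (i), since stability, the CMC condition, and the notion of a generalized enclosed body $T$ are all translation-invariant, I would first translate $M$ so that the mass center of $T$ sits at the origin, i.e.\ $\int_T x\, dv=0$. Taking $N$ to be the outward unit normal of $T$ and applying the divergence theorem to $Y[\xi]$ on $T$ (exactly as in Section \ref{sec3}, with no $\Omega$ term now) yields
\begin{equation*}
\int_M \phi[\xi]\, da = 2(n+1)\int_T \la\xi,x\ra\, dv = 0
\end{equation*}
for every $\xi\in\SS^n$, so $\phi[\xi]\in\cal{F}$ and $\pt^2 E(\phi[\xi])=Q(\xi,\xi)$ with no boundary contribution (since $\pt M=\emptyset$). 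For (ii), the trace computation in Lemma \ref{lem2} transplants verbatim: the Cauchy--Schwarz inequality $\la x,N\ra^2\leq|x|^2$ (with no restriction on the size of $|x|$) still gives $1+|x|^2-2\la x,N\ra^2\geq 1-|x|^2$, and the one place where the ball entered --- discarding $\int_{\pt M}\f{\pt|x|^2}{\pt\nu}(1-|x|^2)\,ds$ using $|x|=1$ on $\pt M$ --- is now trivially avoided since $\pt M=\emptyset$. One therefore obtains
\begin{equation*}
\mathrm{tr}\,Q \leq -\int_M |\nabla|x|^2|^2\, da \leq 0,
\end{equation*}
with equality if and only if $|x|$ is constant on $M$.

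To conclude, stability forces $Q(\xi,\xi)\geq 0$ for every $\xi\in\SS^n$ and hence $\mathrm{tr}\,Q\geq 0$; combined with the above this forces $\mathrm{tr}\,Q=0$, collapsing every inequality to equality and giving $|x|\equiv R$ on $M$. Thus $x(M)\subset \SS^n_R$; since $x$ immerses a closed connected $n$-manifold into an $n$-sphere, invariance of domain and connectedness force $x(M)=\SS^n_R$, so $M$ is the round sphere. I do not anticipate a serious obstacle: the sole new ingredient beyond Theorem \ref{thm1} is the translation normalizing the center of mass of $T$, after which every boundary term is killed for free by $\pt M=\emptyset$ and the algebraic inequality underlying the trace bound is insensitive to the size of $|x|$. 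The only point worth flagging is to verify cleanly that $|x|\equiv R$ plus the immersion/dimension count really does force $M$ to be a single (rather than multiply covered) round sphere --- straightforward once $n\geq 2$, and harmless for $n=1$.
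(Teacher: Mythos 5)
Your proposal is correct and follows essentially the same route as the paper: translate so that the mass center of $T$ is at the origin, use the divergence theorem to make $\phi[\xi]$ an admissible test function, and use the trace bound on $Q$ (whose derivation indeed requires no bound on $|x|$ and loses its boundary term since $\partial M=\emptyset$) to force $|x|\equiv \mathrm{const}$. The paper merely phrases this contrapositively (non-sphere implies $|x|$ nonconstant, hence $Q$ has a negative eigenvalue, hence instability), so the content is identical.
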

\begin{proof}
By translation, assume the mass center of generalized body $T$ enclosed by $M$ is at the origin. So $\int_M \phi[\xi]da=0$ for all $\xi\in \SS^n$. If $M$ is the round sphere, we are done. Otherwise $|x|\neq const$. So by Lemma \ref{lem2} the quadratic form $Q$ has a negative eigenvalue. Choosing $\xi$ as an eigenvector corresponding to the negative eigenvalue, we have
\begin{equation*}
\pt^2 E(\phi[\xi])=-\int_M L\phi[\xi]\cdot \phi[\xi]da=Q(\xi,\xi)<0,
\end{equation*}
which shows that $M$ is unstable.
\end{proof}

\subsection{An open question}

Since all the examples, i.e. the Delaunay type capillary hypersurfaces is known to be stable or unstable, we propose a conjecture as follows.
\begin{conj}
The only stable capillary hypersurface $M^n\,(n\geq 3)$ in a unit Euclidean ball $B^{n+1}$ is a totally geodesic hyperplane or a spherical cap.
\end{conj}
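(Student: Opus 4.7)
The plan is to extend the test-function argument of Theorem \ref{thm1} to capillary hypersurfaces whose generalized body has nonzero mass center, which by Theorem \ref{thm1} is the only remaining case. The difficulty is that for $p = \mathrm{vol}(T)^{-1}\int_T x\,dv \neq 0$ one has $\int_M \phi[\xi]\,da = -2(n+1)\la\xi,p\ra\mathrm{vol}(T)\neq 0$, so the conformal test functions $\phi[\xi]$ fail the admissibility constraint and cannot be used directly.

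First I would introduce a corrected family $\tilde\phi[\xi] = \phi[\xi] + c(\xi)g$, where $g \in H^1(M)$ is a fixed auxiliary function with $\int_M g\,da \neq 0$ and $c(\xi) = -\int_M \phi[\xi]\,da / \int_M g\,da$ is linear in $\xi$, so that $\int_M \tilde\phi[\xi]\,da = 0$. Natural candidates for $g$ are the constant function $1$, the support function $\la x,N\ra$ coming from dilations, or the first Steklov eigenfunction of the stability operator $L$ with boundary condition $\pt_\nu - q$. Substituting $\tilde\phi[\xi]$ into the second variation yields a new symmetric quadratic form
\[
\tilde Q(\xi,\xi) = Q(\xi,\xi) + 2c(\xi)B(\xi) + c(\xi)^2\pt^2E(g),
\]
where $B(\xi)$ is linear in $\xi$ and the last term is a rank-one perturbation in $\xi$. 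Since a rank-one perturbation can lift at most one eigenvalue of $Q$, if $Q$ is shown to have at least two negative eigenvalues then $\tilde Q$ still has a negative eigenvalue, yielding an admissible destabilizing direction.

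The main step is therefore to show that on every capillary hypersurface other than a totally geodesic hyperplane or a spherical cap, $Q$ has at least two negative eigenvalues. Lemma \ref{lem2}(2) already gives $\mathrm{tr}\,Q \leq 0$ with equality iff $|x|$ is constant on $M$; in the rigidity case the CMC assumption forces $M$ onto a sphere centered at the origin, hence into the known stable class. To upgrade ``one negative eigenvalue'' to ``two,'' I would enlarge the test-field family by adjoining the dilation field $x$ to the conformal Killing fields $Y[\xi]$, producing an $(n+2)$-parameter family of admissible directions and a symmetric quadratic form on $\R^{n+2}$, and then run a trace-over-coordinates argument analogous to Lemma \ref{lem2}; the hypothesis $n\geq 3$ should enter via the dimension count in the resulting Morse-index estimate.

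The main obstacle, and the reason this remains a conjecture, is precisely the rank-one cancellation. It is conceivable that $Q$ has exactly one negative eigenvalue whose eigenvector is killed by the correction $c(\xi)^2\pt^2E(g)$, regardless of how $g$ is chosen. One cannot circumvent this by conformally shifting the mass center to the origin via $\varphi_a$, because the Möbius transformations of the ball do not preserve the constant mean curvature condition and so $\varphi_a(M)$ is no longer capillary. A successful proof will likely require either a genuinely new destabilizing family beyond the conformal Killing fields, or a delicate spectral argument that exploits the precise boundary operator $q = \f{1}{\sin\theta} + \cot\theta\,\sigma(\nu,\nu)$ together with the Delaunay classification in Proposition 4 to rule out residual stable candidates.
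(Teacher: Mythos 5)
The statement you are trying to prove is presented in the paper as an open \emph{conjecture}; the paper offers no proof of it, and by your own admission in your final paragraph your proposal does not close the argument either. So the honest verdict is that there is a genuine gap: the entire middle step --- showing that $Q$ (or an enlarged quadratic form) has at least two negative eigenvalues on every capillary hypersurface other than a hyperplane or a spherical cap --- is precisely the content of the conjecture and is nowhere established. Note that the ``two negative eigenvalues $\Rightarrow$ unstable'' reduction you arrive at via the rank-one correction $c(\xi)g$ is already in the paper (the Proposition in Section \ref{sec4}), and in a cleaner form: when the mass center $p$ of $T$ is nonzero, the functions $\phi[\xi]$ with $\xi\perp p$ satisfy $\int_M\phi[\xi]\,da=-2(n+1)\la\xi,p\ra\,\mathrm{vol}(T)=0$ and are therefore already admissible, so one simply restricts $Q$ to the hyperplane $p^{\perp}$ rather than perturbing by an auxiliary $g$; a two-dimensional negative subspace must meet this hyperplane nontrivially. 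Your worry about the negative eigenvector being ``killed by the correction'' is exactly the worry that the negative eigendirection is spanned by $p$ itself, and neither formulation resolves it.

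Two further points in your sketch would fail as written. First, adjoining the dilation field $x$ to the family $Y[\xi]$ does not produce admissible variations: $x$ is \emph{normal} to $\SS^n$ along $\pt M$, not tangential, so the corresponding deformation moves $\pt M$ off $\pt B^{n+1}$ and the boundary identity of Lemma \ref{lem1} (which is what makes $Q$ symmetric and kills the boundary term in $\pt^2E$) has no analogue for it. The only conformal fields of $\R^{n+1}$ tangent to $\SS^n$ are generated by rotations and the $Y[\xi]$, and rotations contribute nothing new, so there is no obvious $(n+2)$-parameter enlargement. Second, your treatment of the equality case of Lemma \ref{lem2}(2) is off: for a capillary hypersurface $|x|\equiv\mathrm{const}$ together with $|x|=1$ on $\pt M$ would force $M\subset\SS^n$, which contradicts $x(\mathrm{int}\,M)\subset B^{n+1}$; hence $\mathrm{tr}\,Q<0$ strictly for \emph{every} capillary hypersurface, including the stable hyperplanes and caps (for which the paper notes $Q$ has exactly one negative eigenvalue). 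The trace inequality therefore cannot by itself distinguish the stable examples from the conjecturally unstable ones, and a genuinely new idea is needed --- which is why this remains a conjecture.
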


There are some remarks on this conjecture.
\begin{enumerate}
  \item For $n\geq 2$, $H=0$ and $\theta=\f{\pi}{2}$, $M$ must be totally geodesic \cite{RV}.
  \item For $n=2$ and $\theta=\f{\pi}{2}$, $M$ is a totally geodesic disk, a spherical cap or a surface of genus $1$ with embedded boundary having at most two connected components \cite{RV}.
  \item For $n=2$ and $H=0$, $M$ is a totally geodesic disk or a surface of genus $1$ with at most three connected boundary components \cite{RS}.
\end{enumerate}


\bibliographystyle{Plain}

\end{document}